\newcommand{\congruent}[4]{#1 #2 \equiv #3 #4}
\newcommand{\betweenness}[3]{\mathrm{B}#1 #2 #3}
\newcommand{\universal}[2]{\forall #1.\; #2}
\newcommand{\existential}[2]{\exists #1.\; #2}
\newcommand{\axiomset}[1]{\textnormal{\textup{\textsf{#1}}}}
\newcommand{\mathaxiomset}[1]{\mathnormal{\mathsf{#1}}}
\newcommand{\cetwo}{\axiomset{CE$_\mathaxiomset{2}$}}
\newcommand{\cetwoprime}{\axiomset{CE$\mathaxiomset{_2'}$}}
\newcommand{\atasol}{\axiomset{A}}
\newcommand{\aprime}{\axiomset{A$\mathaxiomset{'}$}}
\newcommand{\ax}{\textup{(Ax)}}
\newcommand{\model}[1]{$\mathcal{#1}$}
\newtheorem{theorem}{Theorem}
\newtheorem{lemma}[theorem]{Lemma}
\newtheorem{corollary}[theorem]{Corollary}
\newcommand{\lemlabel}[1]{\label{lem:#1}}
\newcommand{\lemref}[1]{Lemma \ref{lem:#1}}
\newcommand{\thmlabel}[1]{\label{thm:#1}}
\newcommand{\thmref}[1]{Theorem \ref{thm:#1}}
\newcommand{\corlabel}[1]{\label{cor:#1}}
\author{T.~J.~M.~Makarios\footnote{\href{mailto:tjm1983@gmail.com}{\nolinkurl{tjm1983@gmail.com}}}}
\title{A further simplification of Tarski's axioms of geometry}
\begin{document}

\maketitle

\begin{abstract}
A slight modification to one of Tarski's axioms of plane Euclidean geometry is proposed.
This modification allows another of the axioms to be omitted from the set of axioms and proven as a theorem.
This change to the system of axioms simplifies the system as a whole, without sacrificing the useful modularity of some of its axioms.
The new system is shown to possess all of the known independence properties of the system on which it was based;
in addition, another of the axioms is shown to be independent in the new system.
\end{abstract}

\section{Background}
Alfred Tarski's axioms of geometry were first described in a course he gave at the University of Warsaw in 1926--1927.
Since then, they have undergone numerous improvements, with some axioms modified, and other superfluous axioms removed;
for a history of the changes, see \cite{givant} (especially Section 2), or for a summary, see Figure 2 in \cite{narboux}.

The axioms rely on only one primitive notion --- that of points --- and two primitive relations: betweenness and congruence.
Congruence is denoted $\congruent{a}{b}{c}{d}$, and can be interpreted as asserting that the line segment from $a$ to $b$ is congruent to the line segment from $c$ to $d$.
Betweenness is denoted $\betweenness{a}{b}{c}$, and can be interpreted as asserting that $b$ lies on the segment from $a$ to $c$ (and may be equal to $a$ or $c$).

The version of the axioms used in \cite{schwabhauser} (see pages 10--14) consists of ten first-order axioms, together with either a first-order axiom schema, or a single higher-order axiom.
This version has been adopted in later publications, such as \cite{mechindtarski} (see Sections 2.3 and 2.4) and \cite{narboux} (see Figure 3), and Victor Pambuccian has called it the ``most polished form'' of Tarski's axioms (see \cite{pambuccianabsolute}, page 122).

This semi-canonical version of the system is the result of many simplifications to the original system of twenty axioms plus one axiom schema.
At least one of these simplifications appears to have taken the form of a slight alteration to one axiom in order to allow another axiom to be dropped and subsequently proven as a theorem.

Specifically, in \cite{decision} (see note 18), axiom (ix) is a version of what is called the \emph{axiom of Pasch}, which states (modulo notational differences):
\[ \universal{a, b, c, p, q}{\betweenness{a}{p}{c} \wedge \betweenness{q}{c}{b} \longrightarrow \existential{x}{\betweenness{a}{x}{q} \wedge \betweenness{x}{p}{b}}} \tag{OP}\label{op} \]
In \cite{whatis}, this has been replaced by axiom A9, which states (again, modulo notational changes):
\[ \universal{a, b, c, p, q}{\existential{x}{\betweenness{a}{p}{c} \wedge \betweenness{q}{c}{b} \longrightarrow \betweenness{a}{x}{q} \wedge \betweenness{b}{p}{x}}} \tag{OP$'$}\label{opp} \]
The significant change between the two is the reversal of the order of points in the final betweenness relation.
They are easily shown to be equivalent using the symmetry of betweenness, which states:
\[ \universal{a, b, c}{\betweenness{a}{b}{c} \longrightarrow \betweenness{c}{b}{a}} \tag{SB}\label{sb} \]

The interesting thing is that in \cite{decision}, \eqref{sb} is an axiom (axiom (iii)), but in \cite{whatis}, it has been removed from the list of axioms.
Haragauri Gupta's proof of \eqref{sb} (see \cite{gupta}, Theorem 2.18) relies on the precise ordering of points in the final betweenness relation in \eqref{opp}.

Also, Wolfram Schwabh\"auser, Wanda Szmielew, and Alfred Tarski, on page 12 of \cite{schwabhauser}, draw attention to the ordering of points in their version of the axiom of Pasch (labelled \eqref{pa} in this paper), noting that it is important until after the proof of \eqref{sb} (which they call Satz 3.2).
Again, their proof of \eqref{sb} (which is essentially the same as this paper's proof of \lemref{sb}) relies on the precise ordering of points in \eqref{pa}.

Therefore, although it does not appear to be explicitly acknowledged in the published literature, it seems likely that the change from \eqref{op} to \eqref{opp} was necessary to allow the removal of \eqref{sb} from the set of axioms.
It may be the case that \eqref{sb} is a theorem even with \eqref{op}, and that this was not known when it was replaced by \eqref{opp}, or that it was known, but \eqref{opp} allows a simpler proof.

In any case, it appears that Tarski was willing to reorder points in his axioms to allow the simplification of the axiom system as a whole, either by removing axioms or merely by simplifying proofs of theorems.

In the tradition of such simplifications, this paper presents one further simplification of the axiom system;
one of the axioms is slightly modified, allowing another of the traditional axioms to be proven as a theorem, rather than assumed as an axiom.

\section{The axioms}

Tarski's axioms, as stated in \cite{schwabhauser}, pages 10--14, are as follows.
The names are adopted from \cite{mechindtarski} (Section 2.4), which provides some diagrams and intuitive explanations for the axioms.
\begin{itemize}
\item Reflexivity axiom for equidistance
\[ \universal{a, b}{\congruent{a}{b}{b}{a}} \tag{RE}\label{re} \]
\item Transitivity axiom for equidistance
\[ \universal{a, b, p, q, r, s}{\congruent{a}{b}{p}{q} \wedge \congruent{a}{b}{r}{s} \longrightarrow \congruent{p}{q}{r}{s}} \tag{TE}\label{te} \]
\item Identity axiom for equidistance
\[ \universal{a, b, c}{\congruent{a}{b}{c}{c} \longrightarrow a = b} \tag{IE}\label{ie} \]
\item Axiom of segment construction
\[ \universal{a, b, c, q}{\existential{x}{\betweenness{q}{a}{x} \wedge \congruent{a}{x}{b}{c}}} \tag{SC}\label{sc} \]
\item Five-segments axiom
\begin{align*}
\universal{a, b, c, d, a', b', c', d'}&{a \neq b \wedge \betweenness{a}{b}{c} \wedge \betweenness{a'}{b'}{c'}} \\
& \qquad \wedge \congruent{a}{b}{a'}{b'} \wedge \congruent{b}{c}{b'}{c'} \wedge \congruent{a}{d}{a'}{d'} \wedge \congruent{b}{d}{b'}{d'} \\
& \longrightarrow \congruent{c}{d}{c'}{d'} \tag{FS}\label{fs}
\end{align*}
\item Identity axiom for betweenness
\[ \universal{a, b}{\betweenness{a}{b}{a} \longrightarrow a = b} \tag{IB}\label{ib} \]
\item Axiom of Pasch
\[ \universal{a, b, c, p, q}{\betweenness{a}{p}{c} \wedge \betweenness{b}{q}{c} \longrightarrow \existential{x}{\betweenness{p}{x}{b} \wedge \betweenness{q}{x}{a}}} \tag{Pa}\label{pa} \]
\item Lower $2$-dimensional axiom
\[ \existential{a, b, c}{\neg \betweenness{a}{b}{c} \wedge \neg \betweenness{b}{c}{a} \wedge \neg \betweenness{c}{a}{b}} \tag{Lo$_2$}\label{l2} \]
\item Upper $2$-dimensional axiom
\[ \universal{a, b, c, p, q}{p \neq q \wedge \congruent apaq \wedge \congruent bpbq \wedge \congruent cpcq \longrightarrow \betweenness abc \vee \betweenness bca \vee \betweenness cab} \tag{Up$_2$}\label{u2} \]
\item Euclidean axiom
\[ \universal{a, b, c, d, t}{\betweenness adt \wedge \betweenness bdc \wedge a \neq d \longrightarrow \existential{x, y}{\betweenness abx \wedge \betweenness acy \wedge \betweenness xty}} \tag{Eu}\label{eu} \]
\item Axiom of continuity
\begin{align*}
\universal{X, Y}&{(\existential{a}{\universal{x, y}{x \in X \wedge y \in Y \longrightarrow \betweenness axy}})} \\
 & \longrightarrow (\existential{b}{\universal{x, y}{x \in X \wedge y \in Y \longrightarrow \betweenness xby}}) \tag{Co}\label{co}
\end{align*}
\end{itemize}

This collection of eleven axioms, Tarski's axioms of the continuous Euclidean plane, will be denoted \cetwo.
A similar collection of axioms, denoted \cetwoprime, is obtained by removing \eqref{re} from \cetwo{} and replacing \eqref{fs} with
\begin{align*}
\universal{a, b, c, d, a', b', c', d'}&{a \neq b \wedge \betweenness{a}{b}{c} \wedge \betweenness{a'}{b'}{c'}} \\
& \qquad \wedge \congruent{a}{b}{a'}{b'} \wedge \congruent{b}{c}{b'}{c'} \wedge \congruent{a}{d}{a'}{d'} \wedge \congruent{b}{d}{b'}{d'} \\
& \longrightarrow \congruent dc{c'}{d'} \tag{FS$'$}\label{fsp}
\end{align*}
The only difference between \eqref{fs} and \eqref{fsp} is the reversal of the first two points in the last congruence relation.

This paper will show that \cetwoprime{} is equivalent to \cetwo, and will, in fact, show a stronger result --- \thmref{aaprime} --- about a smaller set of axioms.

One of the features of Tarski's axiom system is its modularity:
some texts omit or delay the introduction of \eqref{co} (see, for example, \cite{archimedean}, page 61);
\eqref{eu} can be replaced by another axiom in order to investigate hyperbolic geometry, or omitted entirely, for absolute geometry (see \cite{pambuccian}, pages 331--333);
\eqref{l2} and \eqref{u2} can be replaced by other axioms that characterize other dimensions (see \cite{whatis}, footnote 5).
For this reason, let \atasol{} denote the collection of axioms \eqref{re}, \eqref{te}, \eqref{ie}, \eqref{sc}, \eqref{fs}, \eqref{ib}, and \eqref{pa}.
These are Tarski's axioms of absolute dimension-free geometry without the axiom of continuity.
Let \aprime{} denote the collection of axioms \eqref{te}, \eqref{ie}, \eqref{sc}, \eqref{fsp}, \eqref{ib}, and \eqref{pa}.

The stronger result that this paper shows is that \aprime{} is equivalent to \atasol.
Thus, the modularity of axioms \eqref{l2}, \eqref{u2}, \eqref{eu}, and \eqref{co} is unaffected by the proposed change to Tarski's axiom system.

\section{Proof of equivalence}

\begin{lemma}\lemlabel{abab}
If \eqref{te} and \eqref{sc} hold, then given any points $a$ and $b$, we have $\congruent abab$.
\end{lemma}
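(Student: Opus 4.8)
The plan is to use \eqref{te} in the degenerate way that turns its ``transitivity from a common segment'' into reflexivity: if I can exhibit \emph{any} segment that is congruent to $ab$, then feeding that segment into \eqref{te} twice --- with $ab$ playing the role of each of the other two segments --- immediately yields $\congruent abab$. The whole task thus reduces to producing a single segment congruent to $ab$, and that is precisely what \eqref{sc} supplies.

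First I would apply \eqref{sc}, instantiating it so that the segment to be copied is $ab$ and the ray is drawn from $a$; concretely, taking the axiom's four parameters to be $a$, $a$, $b$, and (say) $a$ yields a point $x$ satisfying $\betweenness aax$ and $\congruent axab$. The betweenness conjunct plays no role in what follows and is discarded; all I retain is $\congruent axab$.

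Then I would instantiate \eqref{te} with $ax$ in the position of its shared first segment and $ab$ in the position of each of its other two segments. From the two (identical) hypotheses $\congruent axab$ and $\congruent axab$, the axiom concludes $\congruent abab$, which is exactly the statement of the lemma.

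I do not expect any genuine obstacle, since the argument is a two-step instantiation with no calculation. The only point demanding care is orienting the hypotheses of \eqref{te} correctly: the congruence obtained from \eqref{sc} must occupy the \emph{pivot} (first-segment) slot and be reused for both premises, whereas placing $ab$ itself in the pivot slot would assume the very reflexivity we are trying to establish.
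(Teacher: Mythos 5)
Your proof is correct and is essentially identical to the paper's: both obtain from \eqref{sc} a point $x$ with $\congruent axab$ (the betweenness conjunct discarded) and then feed that single congruence into both hypothesis slots of \eqref{te} to conclude $\congruent abab$. Your remark about keeping $\congruent axab$ in the pivot position of \eqref{te} is exactly the right point of care, and matches the paper's use of the axiom.
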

\begin{proof}
Given $a$ and $b$, \eqref{sc} lets us obtain a point $x$ such that $\congruent axab$.
Using this twice in \eqref{te} gives us $\congruent abab$.
\end{proof}

\begin{lemma}\lemlabel{cdab}
If \eqref{te} and \eqref{sc} hold and $a$, $b$, $c$, and $d$ are points such that $\congruent abcd$, then $\congruent cdab$.
\end{lemma}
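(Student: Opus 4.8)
The plan is to obtain the symmetry of congruence as an almost immediate consequence of transitivity \eqref{te}, using the reflexivity fact already established in \lemref{abab}. The guiding observation is that \eqref{te} allows us to ``cancel'' a shared first segment: from $\congruent abpq$ and $\congruent abrs$ it deduces $\congruent pqrs$. So if I can arrange both premises of \eqref{te} to share the segment $ab$, the conclusion will relate the two ``second'' segments in the order I need.

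First I would invoke \lemref{abab} (which is available since \eqref{te} and \eqref{sc} are assumed) to get the reflexivity instance $\congruent abab$. I would then feed into \eqref{te} the hypothesis $\congruent abcd$ as its first premise and this reflexivity fact $\congruent abab$ as its second premise. Concretely, this is the instantiation $p, q := c, d$ and $r, s := a, b$, so that the two premises become $\congruent abcd$ and $\congruent abab$ and the conclusion $\congruent pqrs$ becomes exactly $\congruent cdab$, as required.

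I do not expect any real obstacle here: the entire argument is a single application of \eqref{te} once reflexivity is in hand. The only point that needs a little care is the bookkeeping of which segment plays the role of the common first argument $ab$ in \eqref{te}. Both premises must be lined up so that $ab$ is the shared leading segment; this forces the conclusion to list $cd$ first and $ab$ second, which is precisely the direction $\congruent cdab$ we are trying to prove. (Note that \eqref{sc} enters only indirectly, through its use in \lemref{abab}; the step from reflexivity to symmetry itself uses only \eqref{te}.)
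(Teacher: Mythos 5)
Your proof is correct and is essentially identical to the paper's own: both invoke \lemref{abab} to obtain $\congruent abab$ and then apply \eqref{te} with premises $\congruent abcd$ and $\congruent abab$ (i.e., $p,q := c,d$ and $r,s := a,b$) to conclude $\congruent cdab$.
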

\begin{proof}
By \lemref{abab}, we have $\congruent abab$.
Using $\congruent abcd$ and $\congruent abab$, \eqref{te} tells us that $\congruent cdab$.
\end{proof}

\begin{lemma}\lemlabel{abb}
If \eqref{ie} and \eqref{sc} hold, then given any points $a$ and $b$, we have $\betweenness abb$.
\end{lemma}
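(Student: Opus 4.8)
The plan is to use \eqref{sc} to manufacture a point $x$ on the far side of $b$ from $a$ whose distance from $b$ is forced to be zero, and then to collapse $x$ onto $b$ using \eqref{ie}. The key observation is that \eqref{sc} lets me prescribe the target segment freely, so I can make it degenerate.

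First I would instantiate \eqref{sc} carefully. In its statement the universally quantified points play four roles: a starting point $q$, a ``middle'' point $a$, and the two endpoints $b, c$ of the segment to be copied, with the conclusion $\betweenness{q}{a}{x} \wedge \congruent{a}{x}{b}{c}$. I would set the middle point to be $b$, the starting point to be $a$, and both endpoints of the target segment to be $b$ as well. This produces a point $x$ with $\betweenness{a}{b}{x}$ and $\congruent{b}{x}{b}{b}$.

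Next I would apply \eqref{ie}. The congruence $\congruent{b}{x}{b}{b}$ has exactly the shape $\congruent{u}{v}{c}{c}$ required by \eqref{ie} (with $u = b$, $v = x$, $c = b$), so it immediately yields $b = x$. Substituting $b$ for $x$ in $\betweenness{a}{b}{x}$ gives $\betweenness{a}{b}{b}$, which is the claim.

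I do not anticipate any real obstacle here; the whole argument rests on choosing the instantiation of \eqref{sc} so that the constructed segment is degenerate, which is precisely the hypothesis needed to trigger \eqref{ie}. It is worth noting that, unlike \lemref{abab} and \lemref{cdab}, this proof uses only \eqref{sc} and \eqref{ie} and does not require \eqref{te}.
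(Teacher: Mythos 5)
Your proof is correct and is essentially identical to the paper's: both instantiate \eqref{sc} with starting point $a$, middle point $b$, and the degenerate target segment from $b$ to $b$, obtaining $x$ with $\betweenness abx$ and $\congruent bxbb$, then apply \eqref{ie} to conclude $b = x$ and hence $\betweenness abb$. Your closing remark that only \eqref{ie} and \eqref{sc} are needed (not \eqref{te}) matches the lemma's stated hypotheses exactly.
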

\begin{proof}
Given $a$ and $b$, \eqref{sc} lets us obtain a point $x$ such that $\betweenness abx$ and $\congruent bxbb$.
Then \eqref{ie} tells us that $b = x$, so $\betweenness abb$.
\end{proof}

\begin{lemma}\lemlabel{sb}
\eqref{ie}, \eqref{sc}, \eqref{ib}, and \eqref{pa} together imply \eqref{sb}.
\end{lemma}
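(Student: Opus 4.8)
The plan is to prove \eqref{sb} by feeding a suitably degenerate configuration into \eqref{pa} and then collapsing the auxiliary point it produces by means of \eqref{ib}. Suppose we are given points $a$, $b$, $c$ with $\betweenness{a}{b}{c}$; the goal is $\betweenness{c}{b}{a}$. The idea is to match $\betweenness{a}{b}{c}$ against the first hypothesis $\betweenness{a}{p}{c}$ of \eqref{pa} (taking the role of $p$ to be $b$), and to supply the second hypothesis $\betweenness{b}{q}{c}$ from a trivial betweenness furnished by \lemref{abb}, choosing the instantiation so that the resulting conclusion contains a relation of the form $\betweenness{b}{x}{b}$, which \eqref{ib} forces to degenerate.

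Concretely, I would first invoke \lemref{abb} — which is available, since \eqref{ie} and \eqref{sc} are among our assumptions — to obtain $\betweenness{b}{c}{c}$. Applying \eqref{pa} to the two hypotheses $\betweenness{a}{b}{c}$ and $\betweenness{b}{c}{c}$, that is, instantiating the universally quantified $a, b, c, p, q$ of \eqref{pa} as $a, b, c, b, c$ respectively, yields a point $x$ satisfying $\betweenness{b}{x}{b}$ and $\betweenness{c}{x}{a}$. By \eqref{ib}, the relation $\betweenness{b}{x}{b}$ gives $b = x$. Substituting $b$ for $x$ in $\betweenness{c}{x}{a}$ then produces exactly $\betweenness{c}{b}{a}$, as required.

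The only real obstacle is selecting the degenerate instance of \eqref{pa} correctly, and this is precisely where the ordering of points in \eqref{pa} becomes essential, as the paper's background section anticipates. Because the conclusion of \eqref{pa} places the ``outer'' points $b$ and $a$ last in the two relations $\betweenness{p}{x}{b}$ and $\betweenness{q}{x}{a}$, the choice of $b$ in the role of $p$, together with the trivial hypothesis $\betweenness{b}{c}{c}$, produces the self-referential relation $\betweenness{b}{x}{b}$ — exactly the shape \eqref{ib} needs in order to pin down $x$. Had the points in the conclusion been ordered differently, this collapse would not be available and the argument would break down, consistent with the remark that Gupta's proof, and that of Schwabh\"auser, Szmielew, and Tarski, depends on the precise ordering of points in the axiom of Pasch.
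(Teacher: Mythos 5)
Your proposal is correct and is essentially identical to the paper's own proof: both obtain $\betweenness bcc$ from \lemref{abb}, apply \eqref{pa} to $\betweenness abc$ and $\betweenness bcc$ to get a point $x$ with $\betweenness bxb$ and $\betweenness cxa$, and then use \eqref{ib} to collapse $x$ to $b$. Your closing remarks about the essential role of the point ordering in \eqref{pa} accurately echo the paper's background discussion.
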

\begin{proof}
Suppose we are given points $a$, $b$, and $c$ such that $\betweenness abc$.
We also have $\betweenness bcc$, by \lemref{abb}.
Then \eqref{pa} lets us obtain a point $x$ such that $\betweenness bxb$ and $\betweenness cxa$.
According to \eqref{ib}, the former implies $b = x$, so the latter tells us that $\betweenness cba$.
\end{proof}

\begin{lemma}\lemlabel{re}
\aprime{} implies \eqref{re}.
\end{lemma}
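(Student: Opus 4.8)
The plan is to derive \eqref{re}, i.e.\ $\congruent abba$ for all points $a$ and $b$, from a single well-chosen instantiation of \eqref{fsp}, after peeling off the degenerate case separately. The guiding observation is that the reversal of the first two points in the last congruence of \eqref{fsp} --- precisely the feature that distinguishes it from \eqref{fs} --- lets us extract an \emph{asymmetric} congruence from an entirely \emph{symmetric} hypothesis. If we feed \eqref{fsp} a configuration in which the primed points literally coincide with the unprimed points, every congruence hypothesis collapses to the trivially true form $\congruent xyxy$ (available from \lemref{abab}), yet the conclusion still flips the final pair of endpoints, delivering exactly the segment reversal we are after.

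Concretely, I would first dispose of the case $a = b$: here $\congruent abba$ is just $\congruent aaaa$, which holds by \lemref{abab}. In the remaining case $a \neq b$, I would apply \eqref{fsp} with the eight points $a, b, c, d, a', b', c', d'$ instantiated as $a, b, b, a, a, b, b, a$, respectively. The inequality hypothesis $a \neq b$ is exactly the case assumption; both betweenness hypotheses become $\betweenness abb$, supplied by \lemref{abb}; and the four congruence hypotheses become $\congruent abab$, $\congruent bbbb$, $\congruent aaaa$, and $\congruent baba$, each an instance of \lemref{abab}. Under this substitution the conclusion $\congruent dc{c'}{d'}$ of \eqref{fsp} reads $\congruent abba$, which is exactly the instance of \eqref{re} we wanted. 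Note that the lemmas invoked, \lemref{abab} and \lemref{abb}, depend only on \eqref{te}, \eqref{sc}, and \eqref{ie}, all of which belong to \aprime.

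The only genuine difficulty is discovering this instantiation; once it is written down, each hypothesis is discharged by an already-proven lemma and nothing else is required. It is instructive to see where the argument would break with the original \eqref{fs}: the very same substitution would then yield the conclusion $\congruent baba$, a triviality, rather than $\congruent abba$. Thus the step that actually does the work is the appeal to the reordered conclusion of \eqref{fsp}, and the main thing to get right is the bookkeeping of which point plays the role of $c$, $d$, $c'$, and $d'$, so that the flipped conclusion lands on $\congruent abba$ instead of on something vacuous.
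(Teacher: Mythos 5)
Your proof is correct, but it takes a genuinely different --- and in fact more economical --- route than the paper's. The paper first uses \eqref{sc} to construct an auxiliary point $x$ with $\betweenness bax$ and $\congruent axba$, splits into the cases $x = a$ (handled via \lemref{cdab} and \eqref{ie}) and $x \neq a$, and in the latter case must invoke the symmetry of betweenness (\lemref{sb}) to turn $\betweenness bax$ into $\betweenness xab$ before instantiating \eqref{fsp} with $a, a' \mapsto x$; $b, b', d, d' \mapsto a$; $c, c' \mapsto b$. Since \lemref{sb} rests on \eqref{ib} and \eqref{pa}, the paper's argument draws on all six axioms of \aprime. You avoid the auxiliary point entirely: your case split is on $a = b$ versus $a \neq b$, and in the nondegenerate case you feed \eqref{fsp} the fully degenerate configuration $(a,b,c,d,a',b',c',d') = (a,b,b,a,a,b,b,a)$, discharging both betweenness hypotheses as $\betweenness abb$ via \lemref{abb} and all four congruence hypotheses as instances of \lemref{abab}, so that the reordered conclusion $\congruent dc{c'}{d'}$ reads $\congruent abba$ directly; I have checked each hypothesis under your substitution and the bookkeeping is right. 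Your closing observation is also apt: the same substitution into \eqref{fs} yields only the vacuous $\congruent baba$, which correctly isolates the reordering as the step doing the work. What your route buys is a slightly sharper result than the lemma as stated: it derives \eqref{re} from \eqref{te}, \eqref{ie}, \eqref{sc}, and \eqref{fsp} alone, making no use of \eqref{ib} or \eqref{pa}, whereas the paper's proof needs those two axioms through its appeal to \lemref{sb}.
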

\begin{proof}
Given arbitrary points $a$ and $b$, \eqref{sc} lets us obtain a point $x$ such that $\betweenness bax$ and $\congruent axba$.
We consider two cases: $x = a$ and $x \neq a$.

If $x = a$, then $\congruent aaba$.
By \lemref{cdab}, we have $\congruent baaa$, so by \eqref{ie}, we have $b = a$.
Substituting this back into the congruence as necessary gives us $\congruent abba$, as desired.

Suppose, on the other hand, that $x \neq a$.
\lemref{sb} and $\betweenness bax$ tell us that $\betweenness xab$.
\lemref{abab} tells us that $\congruent xaxa$, $\congruent abab$, and $\congruent aaaa$.
We make the following substitutions in \eqref{fsp}: $a, a' \mapsto x$; $b, b', d, d' \mapsto a$; and $c, c' \mapsto b$.
Then all of the hypotheses of \eqref{fsp} are satisfied, and its conclusion is that $\congruent abba$.
\end{proof}

\begin{lemma}\lemlabel{fsfsp}
If \eqref{re} and \eqref{te} hold, then \eqref{fsp} is equivalent to \eqref{fs}.
\end{lemma}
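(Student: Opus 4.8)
The plan is to exploit the fact that \eqref{fs} and \eqref{fsp} have \emph{identical} hypotheses, differing only in their conclusions: \eqref{fs} concludes $\congruent cd{c'}{d'}$, whereas \eqref{fsp} concludes $\congruent dc{c'}{d'}$. So I would reduce the equivalence to a single interchangeability fact about congruence: under \eqref{re} and \eqref{te}, for arbitrary points $c$, $d$, $c'$, $d'$ we have $\congruent cd{c'}{d'}$ if and only if $\congruent dc{c'}{d'}$. Granting this, each axiom immediately yields the other --- under the shared hypotheses, whichever conclusion the assumed axiom delivers can be converted into the one the other axiom requires --- so \eqref{fs} and \eqref{fsp} are equivalent.

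To prove the biconditional I would lean on \eqref{re}, which supplies $\congruent cddc$ and $\congruent dccd$ outright, and then apply \eqref{te} twice. For the forward direction, assuming $\congruent cd{c'}{d'}$, I would invoke \eqref{te} with $cd$ as the common left-hand segment, pairing the \eqref{re}-instance $\congruent cddc$ with the assumption to conclude $\congruent dc{c'}{d'}$. The reverse direction is the mirror image: starting from $\congruent dc{c'}{d'}$, I would apply \eqref{te} with $dc$ as the common segment and the \eqref{re}-instance $\congruent dccd$ to recover $\congruent cd{c'}{d'}$. Note that \eqref{sc} is not available here, so the symmetry-of-congruence lemmas proven earlier cannot be cited; fortunately \eqref{re} makes them unnecessary.

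I expect no genuine obstacle; the argument is just a double application of \eqref{te} primed by \eqref{re}. The one point needing care is fitting the segments into the correct slots of \eqref{te}, whose shape $\congruent abpq \wedge \congruent abrs \longrightarrow \congruent pqrs$ demands a \emph{common} segment on the left of both antecedents. Taking that common segment to be $cd$ in one direction and $dc$ in the other --- and letting \eqref{re} furnish the auxiliary congruence between $cd$ and $dc$ --- is essentially the entire proof.
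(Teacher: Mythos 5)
Your proposal is correct and matches the paper's own proof essentially step for step: both reduce the equivalence to the interchangeability of the conclusions (since the hypotheses coincide), and both derive $\congruent dc{c'}{d'} \leftrightarrow \congruent cd{c'}{d'}$ by applying \eqref{te} with the \eqref{re}-instances $\congruent cddc$ and $\congruent dccd$ as the auxiliary congruences. Your added remark about slotting the common segment correctly into \eqref{te} is exactly the care the paper's argument implicitly takes.
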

\begin{proof}
Because the hypotheses of \eqref{fs} and \eqref{fsp} are identical, we need only show that their conclusions are equivalent.

\eqref{re} tells us that $\congruent cddc$ and $\congruent dccd$.

If $\congruent cd{c'}{d'}$, then $\congruent cddc$ together with this fact and \eqref{te} let us conclude that $\congruent dc{c'}{d'}$.

Similarly, if $\congruent dc{c'}{d'}$, then $\congruent dccd$ together with this fact and $\eqref{te}$ let us conclude that $\congruent cd{c'}{d'}$.

Therefore \eqref{fsp} is equivalent to \eqref{fs}.
\end{proof}

\begin{theorem}\thmlabel{aaprime}
\aprime{} is equivalent to \atasol.
\end{theorem}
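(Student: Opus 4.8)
The plan is to prove the equivalence by establishing both implications separately, exploiting the fact that \atasol{} and \aprime{} have most of their axioms in common. Indeed, the axioms \eqref{te}, \eqref{ie}, \eqref{sc}, \eqref{ib}, and \eqref{pa} belong to both systems, so in either direction these five carry over at once; the entire content of the theorem therefore lies in reconciling \eqref{re} and \eqref{fs} (present in \atasol{}) with \eqref{fsp} (present in \aprime{}).

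For the direction \atasol{} $\Rightarrow$ \aprime{}, I would assume the seven axioms of \atasol{}. The five shared axioms hold immediately, so only \eqref{fsp} remains to be derived. Since both \eqref{re} and \eqref{te} are axioms of \atasol{}, \lemref{fsfsp} applies and shows that \eqref{fsp} is equivalent to \eqref{fs}; as \eqref{fs} is assumed, \eqref{fsp} follows, completing this direction.

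For the reverse direction, \aprime{} $\Rightarrow$ \atasol{}, I would assume the six axioms of \aprime{}. Again the five shared axioms transfer directly, leaving \eqref{re} and \eqref{fs} to be derived. The axiom \eqref{re} is exactly the conclusion of \lemref{re}, whose hypotheses are the whole of \aprime{}, so \eqref{re} holds. Now that \eqref{re} is available alongside the assumed \eqref{te}, \lemref{fsfsp} applies once more, yielding the equivalence of \eqref{fsp} and \eqref{fs}; since \eqref{fsp} is an axiom of \aprime{}, \eqref{fs} follows.

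The theorem thus reduces to assembling the preceding lemmas, and the only genuinely substantive step is the derivation of \eqref{re} encapsulated in \lemref{re}. That is where the difficulty of the whole argument is concentrated: one must manufacture the congruence $\congruent abba$ out of the reversed ordering of the last two points in \eqref{fsp} (via a segment-construction point and a case split on whether it coincides with $a$), and this is precisely the ordering that is unavailable in the unmodified \eqref{fs}. With \lemref{re} --- and the chain \lemref{abab}, \lemref{cdab}, \lemref{abb}, and \lemref{sb} on which it depends --- already in hand, no further obstacle remains in the theorem itself.
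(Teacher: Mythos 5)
Your proof is correct and matches the paper's own argument: both use the five shared axioms, derive \eqref{re} from \aprime{} via \lemref{re}, and then invoke \lemref{fsfsp} in each direction to pass between \eqref{fs} and \eqref{fsp}. Your observation that the substantive content is concentrated in \lemref{re} is accurate, and your correct ordering in the reverse direction --- establishing \eqref{re} before applying \lemref{fsfsp} --- is exactly how the paper proceeds.
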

\begin{proof}
By Lemmas \ref{lem:re} and \ref{lem:fsfsp}, \aprime{} implies \eqref{re} and \eqref{fs}.
\aprime{} contains all of the other axioms of \atasol, so \aprime{} implies \atasol.

By \lemref{fsfsp}, \atasol{} implies \eqref{fsp}, and it contains all of the other axioms of \aprime, so \atasol{} implies \aprime.

Therefore \aprime{} is equivalent to \atasol.
\end{proof}

As an immediate corollary, we have the following:
\begin{corollary}
\cetwoprime{} is equivalent to \cetwo.\hfill\qed
\end{corollary}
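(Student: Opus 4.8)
The plan is to recognize that this is an immediate consequence of \thmref{aaprime}, because \cetwo{} and \cetwoprime{} are extensions of \atasol{} and \aprime{} by exactly the same four additional axioms. First I would record the two decompositions explicitly: \cetwo{} consists of the axioms of \atasol{} together with \eqref{l2}, \eqref{u2}, \eqref{eu}, and \eqref{co}, while \cetwoprime{} consists of the axioms of \aprime{} together with the very same four axioms \eqref{l2}, \eqref{u2}, \eqref{eu}, and \eqref{co}. All differences between the two full systems are therefore confined to the \atasol{} versus \aprime{} portion.

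For the forward direction, I would assume \cetwo{} and derive \cetwoprime. Since every axiom of \atasol{} belongs to \cetwo, all of them hold, so \thmref{aaprime} yields \aprime. The remaining axioms \eqref{l2}, \eqref{u2}, \eqref{eu}, and \eqref{co} are themselves members of \cetwo{} and so hold directly. Together these give every axiom of \cetwoprime. The reverse direction is symmetric: assuming \cetwoprime, the axioms of \aprime{} hold, so \thmref{aaprime} supplies \atasol, and the four shared axioms again hold by assumption, yielding all of \cetwo.

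There is no genuine obstacle here; the substantive content lies entirely in \thmref{aaprime}. The only point requiring care is the bookkeeping --- confirming that the four axioms appended to \atasol{} to form \cetwo{} are identical to those appended to \aprime{} to form \cetwoprime{} --- so that the equivalence of the base systems lifts, by the monotonicity of logical consequence, to equivalence of the full systems.
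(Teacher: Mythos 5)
Your proposal is correct and is exactly the argument the paper intends: the paper states the result as an immediate corollary of \thmref{aaprime}, leaving implicit precisely the bookkeeping you make explicit, namely that \cetwo{} and \cetwoprime{} are the extensions of \atasol{} and \aprime{} respectively by the same four axioms \eqref{l2}, \eqref{u2}, \eqref{eu}, and \eqref{co}, so the equivalence of the base systems lifts directly to the full systems.
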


\section{Independence results}

The first part of Section 5 of \cite{givant} (see pages 199 and 200) concerns the independence of Tarski's axioms.
One problem seen there is that the various historical changes to Tarski's axioms often force a reconsideration of previously established independence results.
This paper's suggested simplification of Tarski's axioms is no exception, so this section aims to establish which of the known independence results apply to the specific set of axioms \cetwoprime.

Because \cetwo{} and \cetwoprime{} differ only in their subsets \atasol{} and \aprime, \thmref{aaprime} tells us that the axioms in \cetwoprime{} but not in \aprime{} are independent if and only if they are independent in \cetwo.
In fact, we can go further than this.

\begin{theorem}\thmlabel{independencetransfer}
Suppose that \ax{} is an axiom of \cetwoprime{} other than \eqref{te} or \eqref{fsp}.
If \ax{} is independent in \cetwo{} then \ax{} is also independent in \cetwoprime.
\end{theorem}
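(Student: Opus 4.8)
The plan is to reuse, without modification, the very model that witnesses the independence of \ax{} in \cetwo. Recall that to say \ax{} is independent in \cetwo{} is to assert the existence of a model \model{M} satisfying every axiom of \cetwo{} except \ax{} and in which \ax{} itself fails; I would fix such an \model{M} at the outset. The goal is then merely to check that this same \model{M} also satisfies every axiom of \cetwoprime{} except \ax, which is precisely what is needed to conclude that \ax{} is independent in \cetwoprime.

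The first step is to pin down which axiom \ax{} can be. The only axiom of \cetwoprime{} that is not also an axiom of \cetwo{} is \eqref{fsp}, so, since \ax{} is excluded from being \eqref{fsp}, \ax{} is a common axiom of the two systems; in particular \ax{} is an axiom of \cetwo, and it is neither \eqref{re} nor \eqref{fs} (both of which belong to \cetwo{} but not to \cetwoprime). By hypothesis \ax{} is also not \eqref{te}. Consequently \eqref{re}, \eqref{te}, and \eqref{fs} are all axioms of \cetwo{} distinct from \ax, so each of them holds in \model{M}. The key step is then to promote \eqref{fs} to \eqref{fsp}: since \model{M} satisfies \eqref{re} and \eqref{te}, \lemref{fsfsp} applies and shows that \eqref{fsp} is equivalent to \eqref{fs} in \model{M}, and as \eqref{fs} holds there, so does \eqref{fsp}. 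Every remaining axiom of \cetwoprime{} other than \ax{} is also an axiom of \cetwo{} other than \ax, and hence holds in \model{M} by the choice of \model{M}. Thus \model{M} models all of \cetwoprime{} except \ax, while \ax{} fails in it, and the independence of \ax{} in \cetwoprime{} follows.

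I do not expect a genuine obstacle here; the argument is mostly bookkeeping about which axioms coincide in the two systems. The one point requiring care is the verification that \eqref{re}, \eqref{te}, and \eqref{fs} survive in \model{M}, since this is exactly where the two exclusions in the hypothesis do their work: excluding \eqref{te} keeps \eqref{te} available for \lemref{fsfsp}, while the membership of \ax{} in \cetwoprime{} (together with the exclusion of \eqref{fsp}) already rules out \eqref{re} and \eqref{fs} and guarantees that \ax{} is a genuine axiom of \cetwo. Once those three axioms are known to hold in \model{M}, the appeal to \lemref{fsfsp} is the crux, and everything else is immediate.
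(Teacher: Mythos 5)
Your proposal is correct, and its core coincides exactly with the paper's: the same bookkeeping shows that \ax{} is a common axiom of the two systems, distinct from \eqref{re}, \eqref{te}, and \eqref{fs}, and the same appeal to \lemref{fsfsp} then promotes \eqref{fs} to \eqref{fsp}. Where you genuinely differ is the wrapper around that core. The paper argues proof-theoretically: it shows $\cetwo\setminus\left\lbrace\ax\right\rbrace \vdash \eqref{fsp}$, hence $\cetwo\setminus\left\lbrace\ax\right\rbrace \vdash \cetwoprime\setminus\left\lbrace\ax\right\rbrace$, and concludes by contraposing ``if $\cetwoprime\setminus\left\lbrace\ax\right\rbrace \vdash \ax$ then $\cetwo\setminus\left\lbrace\ax\right\rbrace \vdash \ax$.'' You argue model-theoretically: fix a countermodel \model M witnessing independence in \cetwo{} and verify it is also a countermodel for \cetwoprime. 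The two framings are interchangeable when independence is understood semantically, which is how the surrounding literature (and the paper's own independence proofs, all via explicit models) treats it, and your version has the mild advantage of directly exhibiting the witnessing model. But note that your opening move --- ``to say \ax{} is independent in \cetwo{} is to assert the existence of a model'' --- silently invokes a completeness theorem if independence is instead defined as non-derivability, and that is delicate here because \eqref{co} is a second-order axiom (or an axiom schema), for which completeness is not available in general. The paper's derivability formulation sidesteps this entirely: since \lemref{fsfsp} is itself a derivation, the transfer argument is valid for any reasonable consequence relation, syntactic or semantic. If you retain the model-theoretic phrasing, you should either stipulate the semantic definition of independence at the outset or note that the argument applies verbatim with $\vdash$ in place of satisfaction in \model M.
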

\begin{proof}
Note that \ax{} is not \eqref{te}, because this was explicitly excluded;
nor is it \eqref{re} or \eqref{fs}, because these are not axioms of \cetwoprime, from which \ax{} was chosen.
Therefore $\cetwo\setminus\left\lbrace\ax\right\rbrace$ contains \eqref{re}, \eqref{te}, and \eqref{fs}, so by \lemref{fsfsp}, $\cetwo\setminus\left\lbrace\ax\right\rbrace \vdash \eqref{fsp}$.

All of the axioms of \cetwoprime{} other than \eqref{fsp} are also axioms of \cetwo, so $\cetwo\setminus\left\lbrace\ax\right\rbrace \vdash \cetwoprime\setminus\left\lbrace\ax\right\rbrace$.
Therefore, if $\cetwoprime\setminus\left\lbrace\ax\right\rbrace \vdash \ax$, then $\cetwo\setminus\left\lbrace\ax\right\rbrace \vdash \ax$.
Taking the contrapositive of this statement, we see that if \ax{} is independent in \cetwo{} then it is independent in \cetwoprime.
\end{proof}

This allows us to adopt almost all of the independence results known for \cetwo.

\begin{corollary}\corlabel{importedindependence}
\eqref{sc}, \eqref{ib}, \eqref{l2}, \eqref{u2}, \eqref{eu}, and \eqref{co} are each individually independent in \cetwoprime.
\end{corollary}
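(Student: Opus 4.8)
The plan is to obtain the corollary directly from \thmref{independencetransfer}, applied once to each of the six listed axioms; the theorem has already done the substantive work, so what remains is essentially bookkeeping. First I would confirm that each of \eqref{sc}, \eqref{ib}, \eqref{l2}, \eqref{u2}, \eqref{eu}, and \eqref{co} is eligible to play the role of \ax{} in that theorem. Since \cetwoprime{} arises from \cetwo{} by deleting \eqref{re} and replacing \eqref{fs} with \eqref{fsp}, every one of the six survives unchanged as an axiom of \cetwoprime, and plainly none of them is \eqref{te} or \eqref{fsp}. Thus the hypothesis of \thmref{independencetransfer}, that \ax{} is an axiom of \cetwoprime{} other than \eqref{te} or \eqref{fsp}, is satisfied in each case.

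Next I would import, for each of these six axioms, the fact that it is independent in \cetwo. These are exactly the classical independence results for Tarski's system, which I would cite from the survey in \cite{givant} (Section 5) together with the primary sources collected there. For each axiom \ax{} in the list, that imported fact, combined with the eligibility just checked, lets me invoke \thmref{independencetransfer} and conclude that \ax{} is independent in \cetwoprime. Because the axioms are treated one at a time and the theorem transfers independence individually, gathering these six conclusions establishes the corollary.

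The hard part is not the logical transfer itself, which is purely mechanical once \thmref{independencetransfer} is available, but rather ensuring that the borrowed independence results are stated for precisely the formulation of \cetwo{} used here. I would need to verify that the witnessing models in the cited literature validate the same versions of \eqref{pa}, \eqref{l2}, \eqref{u2}, \eqref{eu}, and \eqref{co} as the present paper adopts; any discrepancy in the phrasing of an axiom would force me to adapt the witness before the transfer could be applied. This matching of formulations, rather than any new geometric construction, is the only genuine content left in the proof, and it is also why the statement confines itself to these six axioms: they are exactly the axioms of \cetwoprime{} to which the theorem applies and for which a suitable independence result in \cetwo{} can be cited.
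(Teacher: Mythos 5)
Your proposal is correct and matches the paper's own argument: the paper likewise invokes \thmref{independencetransfer} for each of the six axioms and imports their independence in \cetwo{} from the literature (it cites \cite{schwabhauser}, page 26, where you cite \cite{givant}, a merely cosmetic difference). Your extra care about verifying that the cited models validate the exact formulations used here is a reasonable diligence point but introduces no new content beyond the paper's proof.
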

\begin{proof}
Schwabh\"auser and his colleagues note the independence of each of these axioms in \cetwo;
see \cite{schwabhauser}, page 26.
\end{proof}

The remaining independence result noted in \cite{schwabhauser} can also be adapted to \cetwoprime:

\begin{theorem}\thmlabel{fspindependent}
\eqref{fsp} is independent in \cetwoprime.
\end{theorem}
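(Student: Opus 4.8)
The plan is to reuse the model that Schwabh\"auser and his colleagues employ to establish the independence of \eqref{fs} in \cetwo{} (\cite{schwabhauser}, page 26), and to observe that this same model already refutes \eqref{fsp}. Note first that \thmref{independencetransfer} is of no help here: it explicitly excludes \eqref{fsp}, precisely because its proof obtains \eqref{fsp} from \eqref{re}, \eqref{te}, and \eqref{fs} within \cetwo, a route that is unavailable when \eqref{fsp} is itself the axiom under scrutiny. A direct, semantic argument is therefore required.

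First I would fix a model \model{M} witnessing the independence of \eqref{fs} in \cetwo. By definition, every axiom of \cetwo{} except \eqref{fs} holds in \model{M}, while \eqref{fs} fails; in particular, both \eqref{re} and \eqref{te} hold in \model{M}. Next I would pin down exactly which sentences need to be verified. The axioms of $\cetwoprime\setminus\left\lbrace\eqref{fsp}\right\rbrace$ are precisely \eqref{te}, \eqref{ie}, \eqref{sc}, \eqref{ib}, \eqref{pa}, \eqref{l2}, \eqref{u2}, \eqref{eu}, and \eqref{co}. These form a subset of the axioms of $\cetwo\setminus\left\lbrace\eqref{fs}\right\rbrace$ (which additionally contains \eqref{re}), so each of them holds in \model{M} with no further work.

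The crux, and really the only step that calls for thought, is to confirm that \eqref{fsp} fails in \model{M}. Here \lemref{fsfsp} supplies the key: since \eqref{re} and \eqref{te} both hold in \model{M}, the two conclusions $\congruent cd{c'}{d'}$ and $\congruent dc{c'}{d'}$ are interchangeable for any points satisfying the common hypotheses of \eqref{fs} and \eqref{fsp}. Consequently \eqref{fs} and \eqref{fsp} stand or fall together in \model{M}, and as \eqref{fs} fails, so must \eqref{fsp}. Thus \model{M} is a model of $\cetwoprime\setminus\left\lbrace\eqref{fsp}\right\rbrace$ in which \eqref{fsp} is false, which is exactly what is needed to conclude that \eqref{fsp} is independent in \cetwoprime. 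I would expect no genuine obstacle beyond articulating this transfer carefully; the whole content lies in recognizing that the failure of \eqref{fs} propagates to \eqref{fsp} through \lemref{fsfsp}.
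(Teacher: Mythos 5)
Your proposal is correct and follows essentially the same route as the paper: take the model \model M witnessing the independence of \eqref{fs} in \cetwo, note that it satisfies \eqref{re} and \eqref{te} so that \lemref{fsfsp} forces \eqref{fsp} to fail along with \eqref{fs}, and observe that every other axiom of \cetwoprime{} is an axiom of \cetwo{} distinct from \eqref{fs} and hence holds in \model M. Your added remark about why \thmref{independencetransfer} cannot apply is a nice clarification but not a departure from the paper's argument.
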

\begin{proof}
Schwabh\"auser and his co-authors note the existence of a model \model M demonstrating the independence of \eqref{fs} in \cetwo;
see \cite{schwabhauser}, page 26.
Because \model M satisfies \eqref{re} and \eqref{te}, but violates \eqref{fs}, we can conclude, using \lemref{fsfsp}, that \model M also violates \eqref{fsp}.
\model M satisfies every other axiom of \cetwoprime{}, because all such axioms are also axioms of \cetwo{} (and are not equal to \eqref{fs});
therefore, \model M demonstrates the independence of \eqref{fsp} in \cetwoprime.
\end{proof}

Because of the absence of \eqref{re} from \cetwoprime, we can easily show the independence of \eqref{te} in that axiom system:

\begin{theorem}\thmlabel{teindependent}
\eqref{te} is independent in \cetwoprime.
\end{theorem}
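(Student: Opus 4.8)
The plan is to prove independence by exhibiting a model \model{N} of every axiom of \cetwoprime{} except \eqref{te}, in which \eqref{te} fails. I would take \model{N} to be the ordinary real coordinate plane with its standard betweenness relation, and I would interpret congruence by the degenerate rule under which $\congruent{a}{b}{c}{d}$ holds precisely when $a = b$; that is, the relation looks only at whether its first segment is a single point and ignores $c$ and $d$ entirely. Since betweenness is left standard, the purely betweenness-based axioms \eqref{ib}, \eqref{pa}, \eqref{l2}, \eqref{eu}, and \eqref{co} hold automatically, so only the four congruence-sensitive axioms \eqref{ie}, \eqref{sc}, \eqref{u2}, and \eqref{fsp} require attention.

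Next I would dispose of \eqref{ie}, \eqref{sc}, and \eqref{u2}. For \eqref{ie}, the hypothesis $\congruent{a}{b}{c}{c}$ holds only when $a = b$ under this interpretation, which is exactly the desired conclusion. For \eqref{sc}, given $a$, $b$, $c$, $q$, the witness $x = a$ suffices, because $\betweenness{q}{a}{a}$ holds in the real plane and $\congruent{a}{a}{b}{c}$ holds since its first segment is degenerate. For \eqref{u2}, the three congruence hypotheses $\congruent{a}{p}{a}{q}$, $\congruent{b}{p}{b}{q}$, and $\congruent{c}{p}{c}{q}$ force $a = p$, $b = p$, and $c = p$, so $a = b = c$ and the three points are trivially collinear, making the betweenness conclusion hold.

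The crux, and the step I expect to be the main obstacle, is \eqref{fsp}. The key idea is that \eqref{fsp} becomes vacuously true: its antecedent requires both $a \neq b$ and $\congruent{a}{b}{a'}{b'}$, but under this interpretation $\congruent{a}{b}{a'}{b'}$ holds only when $a = b$, so the two demands are contradictory and no instantiation can satisfy the full hypothesis. This is exactly where the absence of \eqref{re} is indispensable: the chosen rule makes $\congruent{a}{b}{b}{a}$ equivalent to $a = b$, so \model{N} flatly violates \eqref{re}, and the construction would be unavailable in any system that retained it — which is why dropping \eqref{re} is what makes \eqref{te} easy to refute. Finally I would exhibit the failure of \eqref{te} directly: taking $a = b$ makes both hypotheses $\congruent{a}{b}{p}{q}$ and $\congruent{a}{b}{r}{s}$ hold for all $p$, $q$, $r$, $s$, while choosing $p \neq q$ makes the conclusion $\congruent{p}{q}{r}{s}$ false. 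Hence \model{N} witnesses the independence of \eqref{te} in \cetwoprime.
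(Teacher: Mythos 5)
Your proposal is correct and is essentially identical to the paper's own proof: the same model ($\mathbb{R}^2$ with standard betweenness and $\congruent abcd$ defined to hold iff $a=b$), the same axiom-by-axiom verification including the vacuous truth of \eqref{fsp} and the collapse $a=b=c=p$ in \eqref{u2}, and the same style of counterexample to \eqref{te} (the paper just instantiates yours with $(0,0)$ and $(0,1)$). Your closing observation that the model violates \eqref{re} also matches the paper's remark following its proof, explaining why this argument does not settle the independence of \eqref{te} in \cetwo.
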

\begin{proof}
We proceed as is usual in independence proofs, by defining a model \model M of every axiom of \cetwoprime{} except \eqref{te}.
As in the real Cartesian plane (the standard model of \cetwo, and hence of \cetwoprime), we take $\mathbb{R}^2$ to be the set of points, and define betweenness so that $\betweenness abc$ if and only if $b$ is a convex combination of $a$ and $c$.
Departing from the standard model, we define congruence so that $\congruent abcd$ if and only if $a = b$.

Because the real Cartesian plane is a model of \cetwoprime, and \model M differs from the real Cartesian plane only in its definition of congruence, we can conclude that \model M is a model of all of the axioms of \cetwoprime{} that make no mention of congruence.
Those axioms are \eqref{ib}, \eqref{pa}, \eqref{l2}, \eqref{eu}, and \eqref{co}.

The definition of congruence ensures that \model M trivially satisfies \eqref{ie}.

Choosing $x = a$ ensures that \eqref{sc} is satisfied.

The hypotheses of \eqref{fsp} include $a \neq b$ and $\congruent ab{a'}{b'}$, which implies $a = b$;
this contradiction in the hypotheses means that \eqref{fsp} is vacuously true.

The hypotheses of \eqref{u2} imply that $a = b = c = p$;
it is always the case that $\betweenness ppp$, so \eqref{u2} is satisfied.

Finally, in \model M, it is the case that $\congruent{(0,0)}{(0,0)}{(0,0)}{(0,1)}$, but not the case that $\congruent{(0,0)}{(0,1)}{(0,0)}{(0,1)}$, so \model M does not satisfy \eqref{te}.

Because \model M satisfies every axiom of \cetwoprime{} except \eqref{te}, we can conclude that \eqref{te} is independent in \cetwoprime.
\end{proof}

Notice that \model M in the proof of \thmref{teindependent} also violates \eqref{re} (because it is not the case that $\congruent{(0,0)}{(0,1)}{(0,1)}{(0,0)}$), so this model would not demonstrate the independence of \eqref{te} in \cetwo, which is, as far as the author is aware, still an open question.

We now have independence results for all of the axioms of \cetwoprime{} except \eqref{ie} and \eqref{pa}.
As far as the author knows, the independence of these in \cetwo{} is still an open question (see also \cite{givant}, pages 199 and 200), although there are independence results relating to these axioms in other versions of Tarski's axiom system.

Gupta shows the independence of \eqref{ie} (his axiom A5; see \cite{gupta}, pages 41 and 41a), but only in the context of a particular variant of Tarski's axiom system.
This variant uses a weaker but more complicated form of the upper $2$-dimensional axiom, Gupta's A$'$12;\footnote{It seems that A$'$12 in Gupta's thesis ought to include $u \neq v$ among the hypotheses, as A12 does.
Taken exactly as it is printed, A$'$12 is violated by the real Cartesian plane whenever $x$, $y$, and $z$ are any non-collinear points and $u = v$.}
his independence model for \eqref{ie} also violates \eqref{u2}, which is trivially equivalent to his original axiom A12.

Les\l{}aw Szczerba established the independence of a version of the axiom of Pasch within a certain variant of Tarski's axiom system (see \cite{szczerba}).
This variant used, instead of \eqref{eu}, an axiom essentially asserting that any three non-collinear points have a circumcentre.\footnote{There appears to be a typographical error in the statement of this axiom in \cite{szczerba} (page 492).
His axiom A8$'$ states (in our notation):
\[ \universal{a, b, c}{\existential{p}{\neg(\betweenness abc \vee \betweenness bca \vee \betweenness cab) \longrightarrow \congruent apbp \wedge \congruent bpcb}} \]
It seems that the second congruence relation in the consequent should state $\congruent bpcp$;
see also \cite{givant}, pages 199 and 184.}

\section{Conclusions}

Although this paper has not answered the question of whether \eqref{re} is independent in \cetwo, it has demonstrated that \eqref{re} is superfluous to Tarski's axioms of geometry.
A slight modification to \eqref{fs} allows \eqref{re} to be proven as a theorem, and therefore removed from the set of axioms.
This simplification of the axiom system (to \cetwoprime) does not diminish its deductive power or the important ways in which it exhibits modularity.

Besides removing one of the axioms not known to be independent in \cetwo, \cetwoprime{} allows an easy proof of the independence of \eqref{te}, which was also not known to be independent in \cetwo.
The two remaining independence questions for \cetwoprime{} are, as far as the author knows, still open questions for \cetwo;
furthermore, if either axiom is shown to be independent in \cetwo, then \thmref{independencetransfer} would immediately establish its independence in \cetwoprime.

As well as trying to resolve these remaining independence questions, future work might seek other slight modifications to the axioms that may allow even known independent axioms to be dropped.
That this may be possible can be seen by considering Gupta's fully independent version of Tarski's axioms for plane Euclidean geometry (see \cite{gupta}, pages 41--41c).

His version had eleven axioms,\footnote{Strictly speaking, as he presented it, it had infinitely many axioms, but if his axiom schema A$'$13 is replaced by a comparable second-order axiom, then there are eleven axioms in total, and the second-order axiom is shown to be independent by his example on page 41c.}
some of which were deliberately made more complicated in order to allow easy proofs of the independence of other axioms (see, for example, his note on the independence of his axiom A7 on page 41b).
\cetwo{} already has \textit{simpler} axioms than Gupta's system (which he used only for the demonstration that a fully independent system is possible), but \cetwoprime{} now shows that a reduction in the \textit{number} of axioms is also possible, without making any of them more complex, despite Gupta's system being fully independent.

\bibliographystyle{alpha}
\bibliography{../bib}

\end{document}